\numberwithin{equation}{section}
\newtheorem{theorem}{Theorem}[section]
\newtheorem{lemma}{Lemma}[section]
\newcommand{\Prob}{\mathbb{P}}
\newcommand{\convD}{\, \overset{D}{\longrightarrow} \,}
\newcommand{\E}{\mathbb{E}}
\newcommand{\polya}{{P\'olya}}
\newcommand{\Exp}{{\rm Exp}}
\newcommand{\matA}{{\bf A}}
\newcommand{\matB}{{\bf B}}
\newcommand{\matM}{{\bf M}} 
\newcommand{\matH}{{\bf H}} 
\DeclareRobustCommand{\stirling}{\genfrac\{\}{0pt}{}}
\begin{document}
\centerline{RESEARCH ARTICLE}
\vskip 3mm

\noindent Characterizations of asymptotic distributions of continuous-time \polya\ processes
\vskip 3mm

\vskip 5mm
\begin{table}[ht]
	\begin{tabular}{l l}
		Chen Chen & Panpan Zhang
		\\ Department of Statistics & Department of Statistics
		\\ Pennsylvania State University & University of Connecticut
		\\ State College, PA \ 16801 & Storrs, CT \ 06269
		\\ cuc19@psu.edu  & panpan.zhang@uconn.edu
	\end{tabular}
\end{table}

\vskip 3mm
\noindent Key Words: Bootstrapping; method of moments; partial differential equation; Poissonization; \polya\ urns; Play-the-Winner
\vskip 3mm

\noindent ABSTRACT

We propose an elementary but effective approach to studying a general class of Poissonized tenable and balanced urns on two colors. We characterize the asymptotic behavior of the process via a partial differential equation that governs the process, coupled with the method of moments applied in a bootstrapped manner. We show that the limiting distribution of the process underlying the Bagchi-Pal urn is gamma. We also look into the tenable and balanced processes associated with randomized replacement matrix. Similar results carry over to the process, with minor modifications in the methods of proof, done mutatis mutandis.
\vskip 4mm

\section{Introduction}
\label{Sec:Intro}
The theory of urn models has received increased attention and intensive research from probabilists and statisticians owing to its conceptual simplicity and versatility. In modern times, urn models have been recognized as a fundamental and powerful mathematical tool, and gained rising popularity among researchers of applied sciences. The applications of urn models span in a wide range of areas, such as computer science~\cite{Bagchi}, clinical trails~\cite{Wei1979}, epidemiology~\cite{Egg} and physics~\cite{Ehrenfest}. We refer the interested readers to~\cite{Johnson} and~\cite{Mahmoud} for the history and applications of urn models.

In this note, we focus on \polya-type urns. A \emph{two-color \polya\ urn} is an urn containing balls of two different colors, say white and blue. We start with an urn of a certain number of balls. After each point of time, we draw a ball from the urn at random, observe its color and put it back to the urn. If a white (blue) ball is drawn, then we add $a$ ($c$) white balls and $b$ ($d$) blue ones to the urn. These dynamics of the urn scheme are governed by a {\em replacement matrix}, $\left(\begin{smallmatrix}
a & b 
\\ c & d
\end{smallmatrix}\right)$.
The entries in the replacement matrix do not have to take deterministic values. A randomized replacement matrix is given by $\left(\begin{smallmatrix} 
\mathcal{W} & \mathcal{X} 
\\ \mathcal{Y} & \mathcal{Z} \end{smallmatrix}\right)$, where $\mathcal{W}$,  $\mathcal{X}$, $\mathcal{Y}$, and $\mathcal{Z}$ are random variables with integer-value support.

An urn is said to be {\em balanced} if the total number of balls added is constant, i.e., $a + b = c + d = k$ or $\mathcal{W} + \mathcal{X} = \mathcal{Y} + \mathcal{Z} = k$. The row sum $k$ is called the {\em balance factor}. An urn is said to be {\em tenable} if we always can perpetuate the drawing according to replacement rules on every stochastic path, but never get ``stuck.''

A {\em \polya\ process} is a renewal process obtained by embedding a \polya\ urn scheme into continuous time. It was probably in~\cite{Athreya} that a \polya\ urn model (more precisely, the {\em Bernard Friedman's urn}~\cite{Friedman} was considered in a continuous-time setting. We pick the terminology ``\polya\ process'' from a recent research article~\cite{Sparks}.

The rest of the note is organized as follows: In Section~\ref{Sec:methodologies}, we define \polya\ process (in Section~\ref{Sec:Polya}) and elaborate the methodologies for this study (in Section~\ref{Sec:methods}). We then demonstrate the applications of our method explicitly via two examples: the Bagchi-Pal processes in Section~\ref{Sec:BagchiPal} and a class of randomized \polya\ processes in Section~\ref{Sec:randommatrix}. In particular, we look into the \polya\ process obtained by embedding the randomized Play-the-Winner scheme into continuous time. Some numeric studies are conducted in Section~\ref{Sec:simulation}. Finally, some concluding remarks and and possible directions of future research are given in Section~\ref{Sec:concluding}.

\section{\polya\ processes and methodologies}
\label{Sec:methodologies}
In this section, we formally define \polya\ processes, and then propose a method of bootstrapped moments to characterize the asymptotic behavior of \polya\ processes.

\subsection{\polya\ process}
\label{Sec:Polya}
A (two-color) \polya\ process is obtained by embedding a two-color \polya\ urn scheme into continuous time. The process evolves over time according to some specified rules. Initially, the urn contains a certain number of white and blue balls. Each ball in the urn is endowed with an independent clock that rings in time $\Exp(1)$. When the clock of a ball rings (at a renewal point), the ball is immediately picked from the urn, its color is observed; the ball is then instantaneously placed back in the urn, and the rule is executed. We do not count the time loss of the selection of the ball and the execution of the rules. All new balls are endowed with their own independent clocks. The process progresses in this manner. By the memoryless property of exponential interarrival times, the process is reset to start at every epoch. The \polya\ process is Markovian, and the rate of the process changes owing to the number of ball additions.

Let $W(t)$ and $B(t)$ be the number of white and blue balls in an urn at time $t \ge 0$, respectively. Our goal is to determine the limiting distribution of the process $\bigl(W(t), B(t)\bigr)^{\top}$ (after properly scaled) as $t \to \infty$. This research topic is classical and has been extensively studied in the \polya\ urn model community. The existence of the asymptotic distribution of $\bigl(W(t), B(t)\bigr)^{\top}$ was proved in the seminal paper~\cite{Janson2004}, and the distribution was first determined in~\cite[Theorem 3.1]{Janson2004} and revisited by~\cite[Theorem 3.2]{Chauvin}. 

The main contribution of this note is to propose an elementary but effective method for determining the asymptotic mixed moments of the process $\bigl(W(t), B(t)\bigr)^{\top}$, and consequently characterizing its limiting behavior. Our approach only requires fundamental knowledge of calculus and linear algebra, whereas the proof in~\cite[pages 216--230]{Janson2004} was based on a sophisticated martingale method, in coupled with other results of branching processes~\cite{Ney}. The method in this note provides an alternative technique which can be broadly applied to solving many similar stochastic process problems. Furthermore, our method appears to be a good option for the scientists and researchers who have interests in stochastic analysis but have limited knowledge in advanced probability theory.

\subsection{Methodologies}
\label{Sec:methods}
We present our main methods in this section. The section is divided into two subsections. In Subsection~\ref{Subsec:PDE}, we give two kinds of partial differential equations (PDEs) that respectively govern tenable \polya\ processes associated with deterministic replacement matrices and with randomized replacement matrices. In Subsection~\ref{Subsec:bootstrap}, we introduce a method in a bootstrapped manner to compute the mixed moments of the process. This method is extremely useful when the solutions to the PDEs established in Subsection~\ref{Subsec:PDE} is unwieldy.

\subsubsection{Partial differential equations}
\label{Subsec:PDE}
Consider the joint moment generating function of $\bigl(W(t), B(t)\bigr)^{\top}$: $\phi(t, u, v) := \E \left[e^{uW(t) + vB(t)}\right]$.
For tenable \polya\ urns (not necessary to be balanced) associated with $\left(\begin{smallmatrix}
a & b 
\\ c & d
\end{smallmatrix}\right)$, the joint process is governed by the PDE:
\begin{equation}
	\frac{\partial \phi(t,u,v)} {\partial t} + \bigl(1 - e^{au + bv}\bigr) \frac{\partial \phi(t,u,v)} {\partial u} + \bigl( 1 - e^{cu + dv}\bigr)\frac{\partial \phi(t,u,v)} {\partial v} = 0;
	\label{Eq:PDEgeneral} 
\end{equation}
see~\cite[Lemma 2.1]{Balaji}. The general solution to PDE~(\ref{Eq:PDEgeneral}) is known, but it is an integration along characteristic curves, where the characteristics are difficult to determine in most cases. To the best of our knowledge, there is a very limited number of cases such that the solution to PDE~(\ref{Eq:PDEgeneral}) is given by a closed form, as for instance the case of forward and backward diagonal processes~\cite{Balaji}, the Ehrenfest processes~\cite{Watanabe}, and zero-balanced processes with replacement matrix of Bernoulli entries~\cite{Sparks}. 

Analogously, there is a PDE that governs the joint process $\bigl(W(t), B(t)\bigr)^{\top}$ associated with $\left(\begin{smallmatrix}
\mathcal{W} & \mathcal{X} 
\\ \mathcal{Y} & \mathcal{Z}
\end{smallmatrix}\right)$, developed in~\cite[Lemma 4.1]{Mahmoud}; that is,
\begin{equation}
	\frac{\partial \phi} {\partial t} + \bigl(1 - \psi_{\mathcal{W},\mathcal{X}}(u,v)\bigr) \frac{\partial \phi} {\partial u} + \bigl( 1 - \psi_{\mathcal{Y},\mathcal{Z}}(u,v)\bigr)\frac{\partial \phi} {\partial v} = 0,
	\label{Eq:PDErandom} 
\end{equation}
where $\psi_{\mathcal{W},\mathcal{X}}$ and $\psi_{\mathcal{Y},\mathcal{Z}}$ are the joint moment generating functions of $\mathcal{W}$ and $\mathcal{X}$ and $\mathcal{Y}$ and $\mathcal{Z}$, respectively.

In the next subsection, we shall develop a method to extract mixed moments of \polya\ process, in a bootstrapped way, when the analytical solution to functional equations~(\ref{Eq:PDEgeneral}) or~(\ref{Eq:PDErandom}) is intractable or not in a closed form.

\subsubsection{Method of bootstrapped moments}
\label{Subsec:bootstrap}
In this subsection, we introduce a method to calculate all mixed moments of \polya\ process. We demonstrate our method only for PDE~(\ref{Eq:PDEgeneral}), and that for PDE~(\ref{Eq:PDErandom}) can be done in a similar manner.

We consider the mixed moments of $W(t)$ and $B(t)$, i.e., $\E\bigl[W^{i}(t) B^{j}(t)\bigr]$ for $i \ge 0$, $j \ge 0$, and $i + j \neq 0$. For $t \ge 0$, the mixed moment $\E\bigl[W^{i}(t) B^{j}(t)\bigr]$ can be obtained by applying the differential operator, $\frac {\partial^{i+j}} {\partial u^i \, \partial v^j}$, to the joint moment generating function $\phi(t, u, v)$, and then evaluating it for $u = v = 0$; that is,
$$ \frac {\partial^{i+j}} {\partial u^i \, \partial v^j}  \phi(t,u,v) \Big|_{u=v=0}  = \E\left[\frac {\partial^{i+j}} {\partial u^i \,\partial v^j} e^{W(t) u + B(t) v}\Big|_{u=v=0}\right] = \E\bigl[W^i(t) B^j(t)\bigr].$$

Apply the differential operator $\frac {\partial^{i+j}} {\partial u^i \partial v^j}$ to PDE~(\ref{Eq:PDEgeneral}), and set $u = v = 0$, i.e.,
\begin{align}
	&\frac {\partial^{i+j}} {\partial u^i \, \partial v^j} \Bigl(\frac {\partial} {\partial t} \phi(t,u,v)\Bigr) \Big|_{u=v=0} + \frac {\partial^{i+j}} {\partial u^i \, \partial v^j} \bigl(1 - e^{au+bv}\bigr) \frac {\partial \phi} {\partial u } \Big|_{u=v=0}
	\nonumber \\
	&\qquad \qquad + \frac {\partial^{i+j}} {\partial u^i\, \partial v^j} 
	\bigl(1 - e^{cu + dv}\bigr) \frac {\partial \phi} {\partial v } \Big|_{u=v=0} = 0. 
	\label{Eq:differential}
\end{align}

We evaluate the three terms in the left-hand side of Equation~(\ref{Eq:differential}) one after another, and thus obtain ordinary differential equations (ODEs) for mixed moments of $W(t)$ and $B(t)$:
\begin{align}
	\frac d {d t}  \E\bigl[W^i(t) B^j(t)\bigr] 
	&= \sum_{r=0}^{i-1} {i \choose r} a^{i-r} \E\bigl[W^{r+1} (t)B^{j}(t)\bigr] 
	\nonumber \\
	& \quad + \sum_{r=0}^i \sum_{s=0}^{j-1}  
	{i \choose r}  {j \choose s} a^{i-r} b^{j-s}   
	\E\bigl[W^{r+1} (t)B^{s}(t)\bigr]
	\nonumber \\ 
	& \quad + \sum_{r=0}^{i-1} {i \choose r} c^{i-r} \E\bigl[W^{r} (t)B^{j+1}(t)\bigr] 
	\nonumber \\
	& \quad +  \sum_{r=0}^i  \sum_{s=0}^{j-1}
	{i \choose r}  {j \choose s} c^{i-r} d^{j-s}   
	\E\bigl[W^{r} (t)B^{s+1}(t)\bigr].
	\label{Eq:ODEgeneral}
\end{align}

Let $i + j$ be the {\em order} of mixed moment $\E\bigl[W^i(t) B^j(t)\bigr]$. Equation~(\ref{Eq:ODEgeneral}) suggests that, for any fixed $i + j$, $\E\bigl[W^i(t) B^j(t)\bigr]$ can be computed if we know all the lower-order mixed moments (i.e., $\E\bigl[W^r(t) B^s(t)\bigr]$ for $1 \le r + s < i + j$) and some mixed moments of the same order (i.e., $\E\bigl[W^r(t) B^s(t)\bigr]$ for $r + s = i + j$). We propose a strategy to extract all mixed moments in a bootstrapped manner (i.e., following a particular order) from the ODEs.  We first calculate the mixed moments of the lowest order, i.e., $\E\bigl[W(t)\bigr]$ and $\E\bigl[B(t)\bigr]$. When computing the mixed moments of order $i + j \ge 2$, we plug the solutions of all the mixed moments of order less than $i + j$ into $i + j + 1$ ODEs, and solve all mixed moments of order $i + j$ simultaneously. It is expected that the ODEs of the mixed moments of high order are complicated, since the establishment of those equations requires all mixed moments of lower order. In addition, the higher the order is, the more equations we need to solve. Although the exact solutions of mixed moments are too complex to get, the leading terms are attainable, and we can use them to characterize the asymptotic behavior of the process. Two classes of \polya\ processes are used as examples to demonstrate our methods, respectively presented in Sections~\ref{Sec:BagchiPal} and~\ref{Sec:randommatrix}.

\section{Application to the Bagchi-Pal processes}
\label{Sec:BagchiPal}
In this section, we investigate the {\em Bagchi-Pal processes}, a class of \polya\ processes obtained by embedding the {\em Bagchi-Pal urn} schemes into continuous time. The Bagchi-Pal urn is a generalized \polya-Eggenberger urn~\cite{Egg}, originally used for estimating the computer memory requirements of {\em 2-3 trees}~\cite{Bagchi}. The replacement matrix associated to the Bagchi-Pal urn is given by $\left(\begin{smallmatrix}
a & b
\\ c & d
\end{smallmatrix} \right) = \left(\begin{smallmatrix}
k - b & b
\\ c & k - c
\end{smallmatrix} \right)$. The Bagchi-Pal urn is balanced and tenable as well; see~\cite{Bagchi} for the discussion of tenability. For $t \ge 0$, denote $\tau(t) = W(t) + B(t)$ the total number of balls in the urn. Consider a Bagchi-Pal urn starting with $W_0$ white balls and $B_0$ balls, respectively. 

\begin{theorem}
	\label{Thm:limitdist}
	The asymptotic mixed joint moments are
	$$\lim_{t \to \infty} \frac{\E \bigl[W^i(t) B^j(t)\bigr]}{e^{k(i + j)t}} = \frac{b^j c^i}{(b + c)^{i + j}} k^{i + j} \left\langle \frac{\tau_0}{k} \right\rangle_{i + j},$$
	where $\tau_0 = W_0 + B_0$. Accordingly, we get
	\begin{equation*}
		e^{-kt} \begin{pmatrix} W(t) \\ B(t) \end{pmatrix} \convD {\rm Gamma} \left(\frac{\tau_0}{k}, k\right) \begin{pmatrix} \frac{c}{b+c} \\ \frac{b}{b+c} \end{pmatrix}.
	\end{equation*}
\end{theorem}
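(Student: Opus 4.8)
The plan is to establish the moment asymptotics first, using the bootstrapped ODE machinery of Section~\ref{Sec:methods}, and then to upgrade moment convergence to distributional convergence by the method of moments. For the moments I would specialize Equation~(\ref{Eq:ODEgeneral}) to the Bagchi-Pal matrix $a = k-b$, $d = k-c$, and exploit the triangular-by-order structure: the derivative of a moment of order $n = i+j$ involves only moments of order $\le n$. Collecting the $n+1$ moments of a given order into a vector $m^{(n)}(t) = \bigl(\E[W^i B^{n-i}(t)]\bigr)_{0\le i\le n}$, I obtain a linear system $\dot m^{(n)} = M_n\, m^{(n)} + f^{(n)}(t)$, where $M_n$ encodes the same-order couplings and $f^{(n)}$ is an inhomogeneity built from strictly-lower-order moments already solved in earlier rounds of the bootstrap.

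The heart of the argument is the spectral analysis of $M_n$. Reading the leading terms of Equation~(\ref{Eq:ODEgeneral}) off, $M_n$ is tridiagonal with diagonal $kn - ib - (n-i)c$ and off-diagonal entries $(n-i)b$ and $ic$. I would verify that its eigenvalues are $kn - \ell(b+c)$ for $\ell = 0,\ldots,n$, so that the top eigenvalue $kn$ is simple and, for $b+c>0$, strictly dominant; a direct substitution shows the right eigenvector for $kn$ is $v^{(n)}_i \propto c^i b^{n-i}$ and, crucially, that the matching left eigenvector is the binomial vector $w^{(n)}_i = \binom{n}{i}$. After rescaling $\mu^{(n)} := e^{-knt} m^{(n)}$ the system becomes $\dot\mu^{(n)} = (M_n - kn I)\mu^{(n)} + e^{-knt} f^{(n)}$, in which $M_n - knI$ is stable transverse to $v^{(n)}$ and the rescaled forcing decays like $e^{-kt}$ (lower-order moments grow at most like $e^{k(n-1)t}$). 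Hence $\mu^{(n)}(t)$ converges to a multiple $C_n v^{(n)}$ of the dominant eigenvector.

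To pin down $C_n$ I would project onto the left eigenvector. Since $\sum_i \binom{n}{i} W^i B^{n-i} = \tau^n$, the projection $\langle w^{(n)}, \mu^{(n)}\rangle$ equals $e^{-knt}\E[\tau^n(t)]$ exactly, and its limit is $C_n (b+c)^n$. The total count $\tau(t)$ is a pure-birth (Yule-type) process that adds $k$ balls at rate $\tau$, so $\frac{d}{dt}\E[\tau^n] = kn\,\E[\tau^n] + (\text{lower order})$, and a short recursion gives $e^{-knt}\E[\tau^n]\to \tau_0(\tau_0+k)\cdots(\tau_0+(n-1)k) = k^n\langle \tau_0/k\rangle_n$. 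Therefore $C_n = k^n\langle\tau_0/k\rangle_n/(b+c)^n$, whence $e^{-knt}\E[W^i B^j]\to \frac{b^j c^i}{(b+c)^{n}} k^n \langle\tau_0/k\rangle_n$, the claimed formula; one checks these are exactly the mixed moments of $G\cdot\bigl(\tfrac{c}{b+c},\tfrac{b}{b+c}\bigr)$ with $G\sim\mathrm{Gamma}(\tau_0/k,k)$.

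Finally, because the candidate limit is a deterministic linear image of a Gamma variable—whose moment generating function is finite near the origin, so the law is moment-determinate (Carleman)—convergence of all mixed moments upgrades to convergence in distribution of $e^{-kt}(W(t),B(t))^\top$ by the multivariate method of moments. The obstacle I anticipate is not the bookkeeping but the spectral step: showing that $kn$ is simple and strictly dominant for every $n$ and that the rescaled vector genuinely converges (controlling both the sub-dominant modes and the non-integrable-looking but in fact $O(e^{-kt})$ forcing), rather than merely growing at the right exponential rate. The binomial left-eigenvector identity is the device that makes the normalizing constant fall out cleanly; guessing and verifying it is the one genuinely clever point, after which everything reduces to the one-dimensional asymptotics of $\E[\tau^n(t)]$.
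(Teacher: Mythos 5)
Your proposal follows essentially the same route as the paper: the same order-by-order bootstrap into a tridiagonal linear system whose dominant eigenvalue is $kn$ with spectral gap $b+c$ (the paper's $\lambda_s = nk - s(b+c)$), the same identification of the normalizing constant via the binomial identity $\sum_i \binom{n}{i} W^i(t) B^{n-i}(t) = \tau^n(t)$ together with the asymptotics $\E\bigl[\tau^n(t)\bigr] \sim k^n \left\langle \tau_0/k \right\rangle_n e^{knt}$, and the same method-of-moments conclusion; your right-eigenvector $c^i b^{n-i}$ is exactly the paper's recurrence solution $K_{i,n-i} = (c/b)^i K_{0,n}$ in disguise. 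The only differences are matters of self-containedness rather than of approach: you verify the spectrum and both eigenvectors by direct substitution where the paper invokes the Leonard-pair (Krawtchouk) classification, you derive the $\tau$-moment asymptotics from the pure-birth recursion where the paper cites an exact Stirling-number formula, and you make explicit the moment-determinacy step that the paper leaves implicit.
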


\begin{proof}
	The PDE that governs the Bagchi-Pal process is 
	\begin{equation}
		\frac{\partial \phi} {\partial t} + \bigl(1 - e^{(k-b)u + bv}\bigr) \frac{\partial \phi} {\partial u} + \bigl( 1 - e^{cu+(k-c)v}\bigr)\frac{\partial \phi} {\partial v} = 0.
		\label{Eq:Bagchi-Pal}
	\end{equation}
	The analytical solution to Equation~{\ref{Eq:Bagchi-Pal}} is unwieldy. We resort to the ODE of $\E\bigl[W^i(t) B^j(t)\bigr]$: 
	\begin{align}
		\frac d {d t}  \E\bigl[W^i(t) B^j(t)\bigr] 
		&= \sum_{r=0}^{i-1} {i \choose r} (k-b)^{i-r} \E\bigl[W^{r+1} (t)B^{j}(t)\bigr] 
		\nonumber \\
		& \quad + \sum_{r=0}^i \sum_{s=0}^{j-1}  
		{i \choose r}  {j \choose s} (k-b)^{i-r} b^{j-s}   
		\E\bigl[W^{r+1} (t)B^{s}(t)\bigr]
		\nonumber \\ 
		& \quad + \sum_{r=0}^{i-1} {i \choose r} c^{i-r} \E\bigl[W^{r} (t)B^{j+1}(t)\bigr] 
		\nonumber \\
		& \quad +  \sum_{r=0}^i  \sum_{s=0}^{j-1}
		{i \choose r}  {j \choose s} c^{i-r} (k-c)^{j-s}   
		\E\bigl[W^{r} (t)B^{s+1}(t)\bigr].
		\label{Eq:mixed}
	\end{align}
	
	To simplify the notation, we denote $ m_{i,j} (t)= \E\bigl[W^i(t)B^j(t)\bigr]$. In the next lemma, we prove that the leading term in $m_{i,j} (t)$ is an exponential function of power $k(i + j)t$.
	\begin{lemma}
		\label{Lem:asymom}
		For $i, j \ge 0$ and $i + j \ge 1$, we have
		\begin{equation}
			m_{i, j}(t)  =  K_{i, j} e^{k(i+j)t} + O\left(e^{[k(i+j)-(b+c)]t}\right) + O\left(e^{k(i+j-1)t}\right),
			\label{Eq:exp} 
		\end{equation} 
		where $K_{i, j} =  K_{i, j}(b,c,k,W_0,B_0) \in \mathbb R$ are the coefficients for the leading terms.
	\end{lemma}
	
	\begin{proof}
		We prove the lemma by an induction on order of mixed moments, i.e., $i + j$. Rewrite Equation~(\ref{Eq:mixed}) in terms of $m_{i, j}(t)$ and separate the mixed moments of order $i + j$ from those of lower order; that is,
		\begingroup
		\allowdisplaybreaks
		\begin{align}
			\frac{d}{dt} m_{i,j} (t)  
			&=  \bigl[jb m_{i+1,j-1} (t)\bigr] + \bigl[\bigl( i(k-b) + j(k-c) \bigr) m_{i,j} (t)\bigr] + \bigl[ic m_{i-1,j+1} (t)\bigr] 
			\nonumber \\ 
			&\qquad +  \sum_{r=0}^{i-2} {i \choose r} (k-b)^{i-r}  m_{r+1,j} (t) + \sum_{r=0}^{i-2} {i \choose r} c^{i-r} m_{r,j+1}(t)
			\nonumber \\
			& \qquad + \sum_{r=0}^{i-1} \sum_{s=0}^{j-1} {i \choose r}  {j \choose s} (k-b)^{i-r} b^{j-s} m_{r+1,s} (t) 
			\nonumber \\ 
			& \qquad {} +  \sum_{r=0}^{i-1} \sum_{s=0}^{j-1} {i \choose r}  {j \choose s} c^{i-r} (k-c)^{j-s}  m_{r,s+1} (t) 
			\nonumber \\
			& \qquad + \sum_{s=0}^{j-2} {j \choose s} b^{j-s} m_{i+1,s} (t)
			+ \sum_{s=0}^{j-2} {j \choose s} (k-c)^{j-s}  m_{i,s+1} (t).
			\label{Eq:first-diff}
		\end{align}
		\endgroup
		
		Denote $i + j = n$. The base of the induction is $n = 1$, which is either $\{i = 1, j = 0\}$ or $\{i = 0, j = 1\}$. The corresponding differential equations are
		\begin{align*}
			\frac{d}{d t} \, \E \bigl[W(t)\bigr] &= (k-b)\, \E \bigl[W(t)\bigr] + c\, \E \bigl[B(t)\bigr],\\
			\frac{d}{d t} \, \E \bigl[B(t)\bigr] &= b\, \E \bigl[W(t)\bigr] + (k-c)\, \E \bigl[B(t)\bigr],
		\end{align*}
		which jointly form a standard differential equation system with solutions
		\begin{align*}
			\E\bigl[W(t)\bigr] &= \frac{c}{b+c}\tau_0 e^{kt} + \frac{cB_0-bW_0}{b+c}e^{(k-b-c)t},\\
			\E\bigl[B(t)\bigr] &= \frac{b}{b+c}\tau_0 e^{kt} - \frac{cB_0-bW_0}{b+c}e^{(k-b-c)t}.
		\end{align*}
		The base is verified. Let $\matM_n(t)$ be an $(n + 1) \times 1$ vector that contains all mixed moments of order $n$, i.e., $\matM_n(t) = \left( m_{n,0}(t), \ldots, m_{1,n-1}(t),
		m_{0,n}(t)\right)^{\top}$ and $\matH_n(t)$ be an $\bigl((n^2 + n - 2)/2\bigr) \times 1$ vector that contains all mixed moments of order up to $n - 1$, i.e., $\matH_n(t) = \left(m_{1,0}(t), m_{0,1}(t), \ldots, m_{0,n-1}(t)\right)^{\top}$. Thus, the ODEs (c.f.\ Equation~(\ref{Eq:first-diff})) can be represented by the following non-homogeneous matrix differentiation equation:                   
		\begin{eqnarray}
			\frac d {d t} \matM_n(t) = \matA_n \matM_n(t) + \matB_n \matH_{n}(t),
			\label{Eq:matrix}
		\end{eqnarray} 
		where $\matB_n$ is an $(n + 1) \times \bigl((n^2 + n - 2)/2\bigr)$ matrix free of $t$, and $\matA_n$ is an $(n + 1) \times (n + 1)$ tridiagonal matrix, which can be expressed explicitly as follows:
		$$\matA_n =  \begin{pmatrix} n\alpha & nc & 0 & \cdots & 0 & 0\\
		b & n\alpha-\beta & (n-1)c & \cdots & 0 & 0\\
		\vdots & \vdots & \vdots & \ddots & \vdots & \vdots \\
		0 & 0 & 0 & \cdots & n\alpha-(n-1)\beta & c\\
		0 & 0 & 0 & \cdots & nb & n\alpha- n\beta\end{pmatrix}$$
		with $\alpha = k-b$ and $\beta = c-b$. The tridiagonal matrix $\matA_n$ is a member of {\em Leonard pairs of the Krawtchouk type}; see~\cite{Terwilliger} for details. This type of matrix is known to have real eigenvalues $\lambda_0 > \lambda_1 > \cdots > \lambda_{n}$ forming an arithmetic progression. Based on this distinct feature, we are able to compute all eigenvalues for $\matA_n$. They are $\lambda_s = nk - s(b+c)$ for $s = 0,1,\ldots,n$.
		
		We are now ready to prove the inductive step. Assume that Equation~(\ref{Eq:exp}) holds for all mixed moments with order $i + j$ up to $n - 1$. For $i + j = n$, the vector $\matH_n(t)$ in Equation~(\ref{Eq:matrix}) can be represented as
		$$\matH_n(t) =  \begin{pmatrix}  K_{1, 0} \, e^{kt} + O(e^{[k-(b+c)]t}) + O(1) \\
		K_{0, 1} \, e^{kt} + O(e^{[k-(b+c)]t}) + O(1)  \\
		\vdots \\
		K_{0,n-1} \, e^{(n-1)kt} + O(e^{[(n-1)k-(b+c)]t}) + O(e^{(n-2)kt}) \end{pmatrix}.$$
		Noticing that the coefficient matrix $\matB_n$ is free of $t$, we thus conclude 
		$$\matB_n\matH_n(t) = \left(O(e^{(n-1)kt}), O(e^{(n-1)kt}), \ldots, O(e^{(n-1)kt})\right)^{\top}.$$
		
		Note that Equation~(\ref{Eq:matrix}) is in a class of non-homogeneous matrix differential equations known to have a general solution~\cite{Edwards}; that is,
		\begin{equation}
			\matM_n(t) = e^{\matA_n t} \matM_n(0) + \int_{0}^{t} e^{-\matA_n(x-t)}\matB_n\matH_n(x) \, dx.
			\label{Eq:matrixsolution}
		\end{equation} 
		We evaluate the two terms in the right-hand side of Equation~(\ref{Eq:matrixsolution}) one by one. We apply the {\em Sylvester's formula} to the first term to get
		\begin{equation}
			e^{\matA_n t} \matM_n(0) = \left(\sum_{s=0}^{n} e^{\lambda_s t} {\cal E}_s \right)\matM_n(0) =  \begin{pmatrix}   \xi_{n,0} \, e^{nk t} + O(e^{[nk -(b+c)]t})\\
				\xi_{n-1,1} \, e^{nk t} + O(e^{[nk -(b+c)]t})\\
				\vdots \\
				\xi_{0,n} \, e^{nk t} + O(e^{[nk -(b+c)]t})\end{pmatrix},  
			\label{Eq:matrix1}    
		\end{equation}
		where ${\cal E}_s$ are idempotent matrices and $\xi_{i,n-i} = \xi_{i, n - i}(b,c,k,W_0,B_0) \in \mathbb R$, for $i = 0, 1, \ldots, n$, are the coefficients for the leading terms therein. We compute the second term in a similar manner, and obtain
		\begingroup
		\allowdisplaybreaks
		\begin{equation*}
			\int_{0}^{t} e^{-\matA_n(x-t)} \matB_n \matH_n(x) dx = \begin{pmatrix}   \pi_{n, 0} \, e^{nkt} + O\bigl(e^{[nk -(b+c)]t}\bigr) + O\bigl(e^{(n-1)kt}\bigr) \\
				\pi_{n-1,1} \, e^{nkt} + O\bigl(e^{[nk -(b+c)]t}\bigr) + O\bigl(e^{(n-1)kt}\bigr)\\
				\vdots \\
				\pi_{0,n} \, e^{nkt} + O\bigl(e^{[nk -(b+c)]t}\bigr) + O\bigl(e^{(n-1)kt}\bigr)\end{pmatrix},
		\end{equation*}
		\endgroup
		where $\pi_{i,n-i} = \pi_{i,n-i}(b,c,k,W_0,B_0) \in \mathbb R$ are the coefficients for the leading terms.
		The proof is completed by putting two results together.
	\end{proof}
	
	The derivation of the asymptotic mixed moments for $W(t)$ and $B(t)$ needs the moments for the total number of balls, $\tau(t)$. The moments for $\tau(t)$ are known. The moment generating function of $\tau(t)$ was determined in~\cite{Balaji}, and the exact moments for $\tau(t)$ were given in terms of Stirling numbers of the second kind; see~\cite[Section 5]{Chen}. We state the exact moments for $\tau(t)$ without proof in the next lemma. We would like to remark that the result is not only true for the Bagchi-Pal processes, but also all \polya\ processes associated with tenable and balanced \polya\ urn schemes. 
	\begin{lemma}
		\label{Lm:total}
		For $n \ge 1$, the moments of $\tau(t)$ are
		\begin{equation}
			\E \bigl[\tau^n(t)\bigr] = k^n \sum_{i=1}^{n} (-1)^{n-i} \stirling{n}{i} \left\langle \frac{\tau_0}{k} \right\rangle_i \, e^{kit}.
			\label{Eq:moments of total}
		\end{equation}
	\end{lemma}
	
	The last task is to calculate the coefficients $K_{i, j} = K_{i, n - i}$, for $i = 0, 1, \ldots, n$.
	
	We write the mixed moments $m_{i, n - i}(t)$ in Equation~(\ref{Eq:first-diff}) in terms of those given in Equation~(\ref{Eq:exp}), and obtain a recurrence for $K_{i, n - i}$:
	$$
	(nc + ib - ic)K_{i, n - i} = (n - i)bK_{i + 1, n - i - 1} + ic K_{i - 1, n - i + 1},
	$$
	with the initial condition $K_{1, n - 1} = (c/b) K_{0, n}$. The solution is given by $K_{i, n - i} = (c/b)^i K_{0, n}.$ Noticing that $\E\bigl[\tau^n(t) \bigr] = \E \bigl[(W(t) + B(t))^n \bigr]$, we apply the {\em Binomial Theorem} and obtain
	$$ \sum_{i=1}^{n} k^n (-1)^{n-i} \stirling{n}{i} \left\langle \frac{\tau_0}{k} \right\rangle_i \, e^{kit} =  \E \bigl[\tau^n(t)\bigr] = \sum_{i=0}^{n} {n \choose i} m_{i, n - i}(t).$$
	Dividing both sides of the last display by $e^{knt}$, and letting $t$ go to infinity, we have
	$$ \lim_{t \to \infty}  \sum_{i=1}^{n} k^n (-1)^{n-i} \stirling{n}{i} \left\langle \frac{\tau_0}{k} \right\rangle_i \, \frac{e^{kit}}{e^{knt}} = \lim_{t \to \infty} \sum_{i=0}^{n} {n \choose i} \frac{m_{i, n - i}(t)}{e^{knt}},$$
	leading to a simple linear equation for $K_{0, n}$:
	$$k^n \left\langle \frac{\tau_0}{k} \right\rangle_n = K_{0,n} \left(1 + \frac{c}{b}\right)^n.$$
	We thus have
	$$K_{0,n}  = \left(\frac{b}{b + c}\right)^n k^n \left\langle \frac{\tau_0}{k} \right\rangle_n \qquad \mbox{and} \qquad K_{i, n - i} = \frac{b^{n - i} c^i}{(b + c)^n} k^{n} \left\langle \frac{\tau_0}{k} \right\rangle_n.$$
	Recalling Lemma~\ref{Lem:asymom}, we replace $n - i$ by $j$ in the last display to get
	$$
	\lim_{t \to \infty} \frac{m_{i, j}(t)}{e^{k(i + j)t}} = K_{i, j} = \frac{b^j c^i}{(b + c)^{i + j}} k^{i + j} \left\langle \frac{\tau_0}{k} \right\rangle_{i + j},
	$$
	which immediately leads to the results stated in the theorem.
\end{proof}

Before closing this section, we give two remarks. First, the asymptotic Pearson's correlation coefficient between $W(t)/e^{kt}$ and $B(t)/e^{kt}$ is equal to $1$, which is an instantaneous corollary of Theorem~\ref{Thm:limitdist}. Second, asymptotic mixed moments of $W(t)$ and $B(t)$ only depend on the initial total number of balls ($\tau_0$) in the urn, but not specifically on the initial number of white balls ($W_0$), nor on the initial number of blue balls ($B_0$). 

\section{Application to a class of randomized \polya\ processes}
\label{Sec:randommatrix}
In this section, we present an application of our method to a class of tenable and balanced processes with replacement matrix of random entries. Similar results (c.f.\ Theorem~\ref{Thm:limitdist}) are obtained for this class of processes with minor modifications in the proofs, done mutatis mutandis. Therefore, we will only state the major results, but omit those analogous arguments.

Let $k$ be the balance factor. The replacement matrix is $\left(\begin{smallmatrix}
\mathcal{W} & k - \mathcal{W}
\\ k - \mathcal{Z} & \mathcal{Z}
\end{smallmatrix}\right)$, where $\mathcal{W}$ and $\mathcal{Z}$ are discrete random variables. To avoid issues with tenability, let the support for $\mathcal{W}$, as well as $\mathcal{Z}$, be $\{0, 1, \ldots,k\}$. By the definition of joint moment generating function, we have
\begin{align*}
	\psi_{\mathcal{W}, k-\mathcal{W}}(u, v) &= \E\left[e^{\mathcal{W}u+(k-\mathcal{W})v}\right] = \sum_{w=0}^{k} \Prob(\mathcal{W} = w) e^{w u + (k-w) v}
	\\ \psi_{k-\mathcal{Z}, \mathcal{Z}}(u, v) &=  \E\left[e^{(k - \mathcal{Z})u+\mathcal{Z})v}\right] =\sum_{z=0}^{k} \Prob(\mathcal{Z}=z) e^{(k-z) u+ z v}
\end{align*}
Plugging the joint generating functions into Equation~(\ref{Eq:PDErandom}) and applying the differential operator to the equation, we obtain the ODEs for mixed moments and ultimately get a similar result as Lemma~\ref{Lem:asymom}; that is,
\begin{equation*}
	\E\bigl[W^i(t) B^j(t)\bigr]  =  M_{i, j} e^{k(i+j)t} + O\left(e^{[k(i+j)-(\mu_\mathcal{W}+\mu_\mathcal{Z})]t}\right) + O\left(e^{k(i+j-1)t}\right),
\end{equation*} 
where $M_{i, j} = \frac{(k - \mu_\mathcal{W})^{n - i}(k - \mu_\mathcal{Z})^{i}}{\left((k - \mu_\mathcal{W}) + (k - \mu_\mathcal{Z}) \right)^{i + j}} k^{i + j} \left\langle\frac{\tau_0}{k}\right\rangle_{i + j}$, and $\mu_\mathcal{W}$ and $\mu_\mathcal{Z}$ are the means of $\mathcal{W}$ and $\mathcal{Z}$, respectively. Accordingly, we have
\begin{equation}
\label{Eq:limitrandom}
	e^{-kt} \begin{pmatrix} W(t) \\ B(t) \end{pmatrix} \convD {\rm Gamma} \left(\frac{\tau_0}{k}, k\right) \begin{pmatrix} \frac{k - \mu_\mathcal{Z}}{2k - \mu_\mathcal{W} - \mu_\mathcal{Z}} \\ \frac{k - \mu_\mathcal{W}}{2k - \mu_\mathcal{W} - \mu_\mathcal{Z}} \end{pmatrix}.
\end{equation}

\polya\ processes with randomized replacement matrix have found applications in many fields. One of the most well-known examples is the randomized {\em Play-the-Winner} scheme. The randomized Play-the-Winner scheme is an adaptive design in clinical trials, proposed by~\cite{Wei}. It was probably first noted in~\cite{Wei1979} that the randomized Play-the-Winner scheme could be formulated by \polya\ urn models.

Consider the following senario. Suppose that there are two treatments, $T_1$ and $T_2$, and a clinician selects a treatment for the next patient according to the following rules. The clinician randomly selects a ball from a two-color (white and blue) urn, observes its color, and returns it back to the urn. If the ball is white, $T_1$ is given to the patient. If $T_1$ succeeds, one white ball is added to the urn; otherwise, one blue ball is added to the urn. On the other hand, if the ball drawn by the clinician is blue, $T_2$ is given to the patient. If $T_2$ succeeds, one blue ball is added to the urn; otherwise, one white ball is added to the urn. Such scheme is always in favor of the successful treatment. Suppose that the success rate for each of the treatments stays unchanged, say $p_1$ for $T_1$ and $p_2$ for $T_2$, the dynamics of such urn scheme can be represented by the replacement matrix $\left(\begin{smallmatrix}
\mathcal{B}_1 & 1 - \mathcal{B}_1
\\ 1 - \mathcal{B}_2 & \mathcal{B}_2
\end{smallmatrix}\right)$,
where $\mathcal{B}_1$ and $\mathcal{B}_2$ are Bernoulli random variables with success rates $p_1$ and $p_2$, respectively. According to the result in Equation~(\ref{Eq:limitrandom}), we obtain the following asymptotic distribution for the Poissonized \polya\ urns associated with the randomized Play-the-Winner scheme; namely,
\begin{equation*}
	e^{-t} \begin{pmatrix} W(t) \\ B(t) \end{pmatrix} \convD {\rm Gamma} (\tau_0, 1) \begin{pmatrix} \frac{q_2}{q_1 + q_2} \\ \frac{q_1}{q_1 + q_2} \end{pmatrix},
\end{equation*}
where $q_1 = 1 - p_1$ and $q_2 = 1 - p_2$. This yields that the proportion of patients assigned to $T_1$ converges to $q_2/(q_1 + q_2)$, which is consistent with the asymptotic result of the randomized Play-the-Winner scheme that progresses in discrete time; see~\cite{Rosenberger}.

\section{Simulation results}
\label{Sec:simulation}
We conduct some simulation studies in this section to evaluate and verify the theoretical results developed in Sections~\ref{Sec:BagchiPal} and~\ref{Sec:randommatrix}. 

We first consider the Bagchi-Pal process. Given $W_0$, $B_0$ (the initial number of white and blue balls in the urn), $t^{*}$ (a threshold of time that terminates the simulation), and $a$, $b$, $c$, $d$ (the four entries in the ball addition replacement matrix) such that $k = a + b = c + d$, our Monte-Carlo experiment proceeds as follows:

For each Monte-Carlo replica $m$, we start by generating $\tau = W + B$ (initially $W = W_0$ and $B = B_0$) independent $\Exp(1)$ random variables as clocks for each of the balls in the urn at $t_0 = 0$. At time point $t = t_0 + \Exp(1/\tau)$, a clock rings. To implement the ball addition rule, we generate an independent random variable $U \sim {\rm Unif}(0, 1)$. If $U \le W/(W + B)$, we update $W = W + a$ and $B = B + b$; otherwise, we update $W = W + c$ and $B = B + d$. Lastly, we update $\tau = \tau + k$, and restart the iteration at the next renewal point. We continue iterations in this manner until time $t$ exceeds the given threshold~$t^{*}$. We evaluate the proportion of white balls in the urn for each replica $m = 1, 2, \ldots, M$, and graphically compare the estimates with our theoretical results.
\begin{figure}[ht]
	\centering
	\begin{minipage}[b]{0.45\textwidth}
		\includegraphics[width=\textwidth]{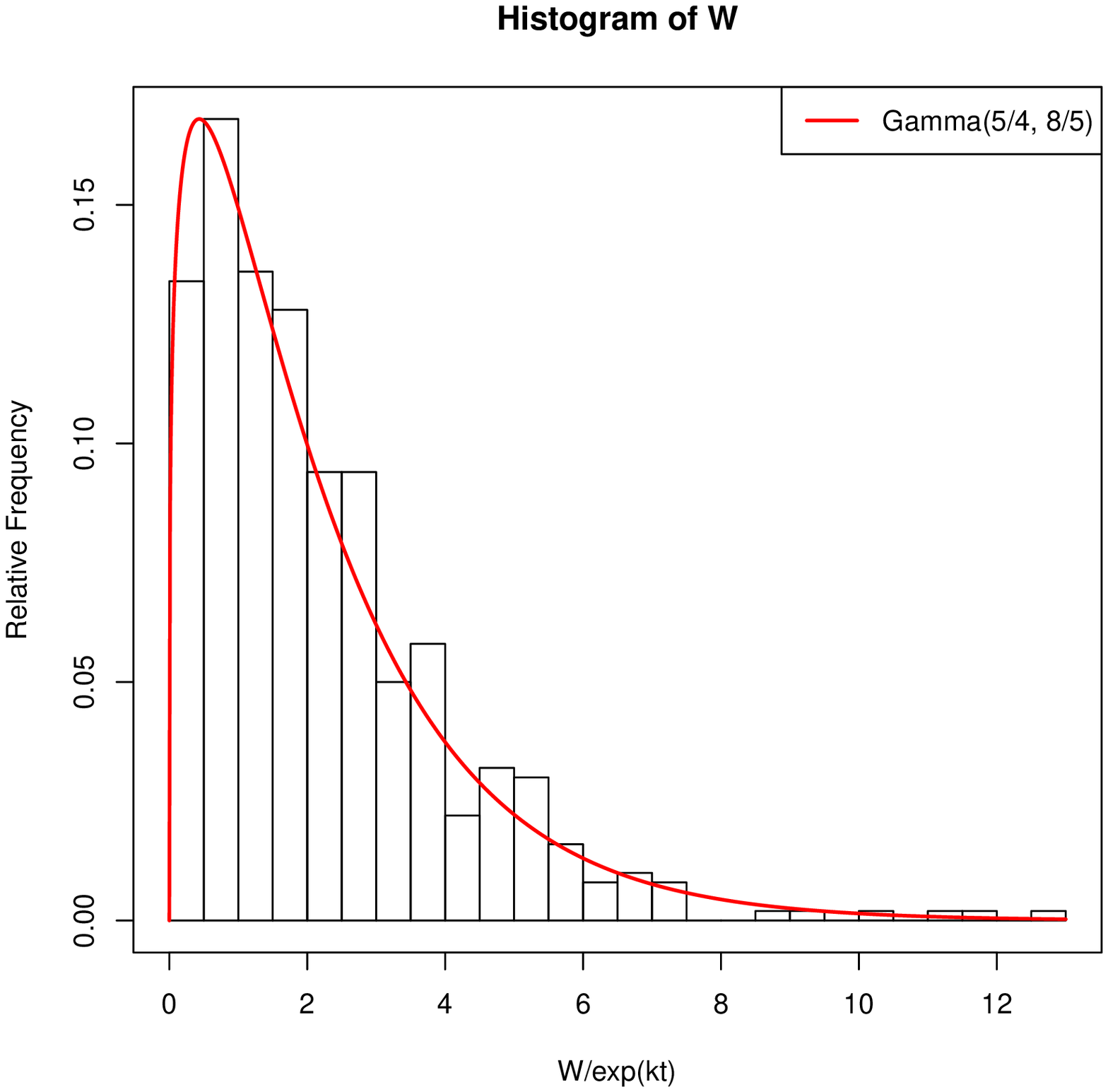}
		\caption{Histogram of $W(t)/e^{kt}$ ($k = 4$) of the Bagchi-Pal process.}
		\label{Fig:BPw}
	\end{minipage}
	\hfill
	\begin{minipage}[b]{0.45\textwidth}
		\includegraphics[width=\textwidth]{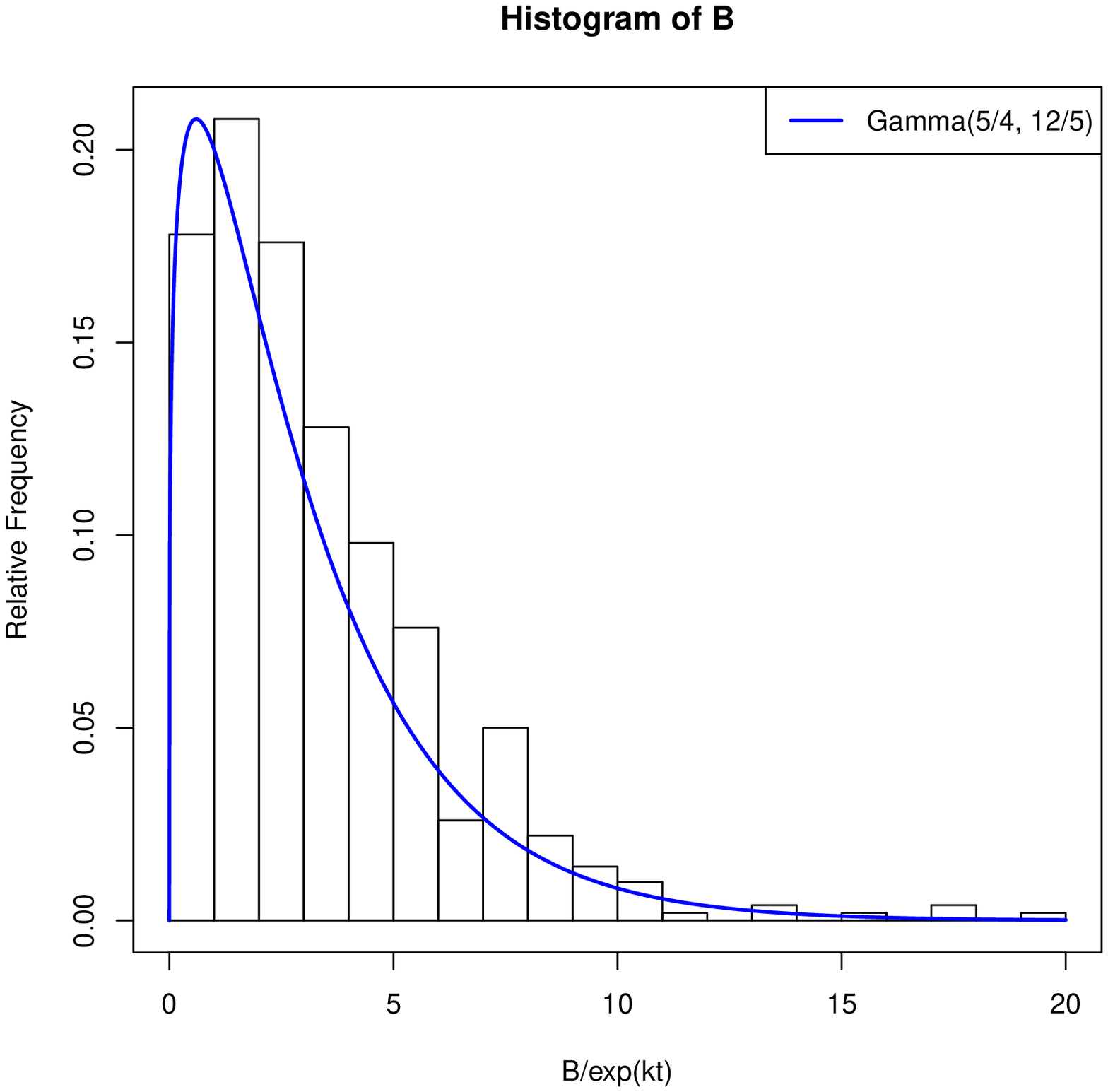}
		\caption{Histogram of $B(t)/e^{kt}$ ($k = 4$) of the Bagchi-Pal process.}
		\label{Fig:BPb}
	\end{minipage}
\end{figure}

Figures~\ref{Fig:BPw} and~\ref{Fig:BPb} depict illustrative results from our Monte-Carlo experiment, with $W_0 = 3$, $B_0 = 2$, and the replacement matrix is $\left(\begin{smallmatrix} 
1 & 3 
\\ 2 & 2 \end{smallmatrix}\right)$. The threshold $t^*$ is set at $2$, where the ball addition rule is executed about $4112$ times (in average). The number of simulations $M$ is $500$. The histograms of $W(t)$ and $B(t)$ after properly scaled ($e^{4t}$) are compared with the probability density functions of their associated limiting distributions (respectively ${\rm Gamma}(5/4, 8/5)$ and ${\rm Gamma}(5/4, 12/5)$). 

We compute the sample proportion of the number of white balls in the urn over $500$ replicates; that is $0.39998$, cf.\ the theoretically asymptotic proportion $c / (b + c) = 2/5$. In addition, the sample correlation between $W(t)/e^{4t}$ and $B(t)/e^{4t}$ is $0.99997$.

We conduct an analogous numerical study for \polya\ processes with randomized replacement matrix. We take the randomized Play-the-Winner scheme as an example. We again set initial conditions at $W_0 = 3$ and $B_0 = 2$, and select $\mathcal{B}_1 \sim {\rm Bernoulli}(p_1)$ and $\mathcal{B}_2 \sim {\rm Bernoulli}(p_2)$ for $p_1 = 3/10$ and $p_2 = 6/10$. The threshold $t^*$ is set at $7$, and $500$ replications are simulated. The histograms of $W(t)$ and $B(t)$ after properly scaled ($e^t$) are depicted in Figures~\ref{Fig:PWw} and~\ref{Fig:PWb}.

\begin{figure}[ht]
	\centering
	\begin{minipage}[b]{0.45\textwidth}
		\includegraphics[width=\textwidth]{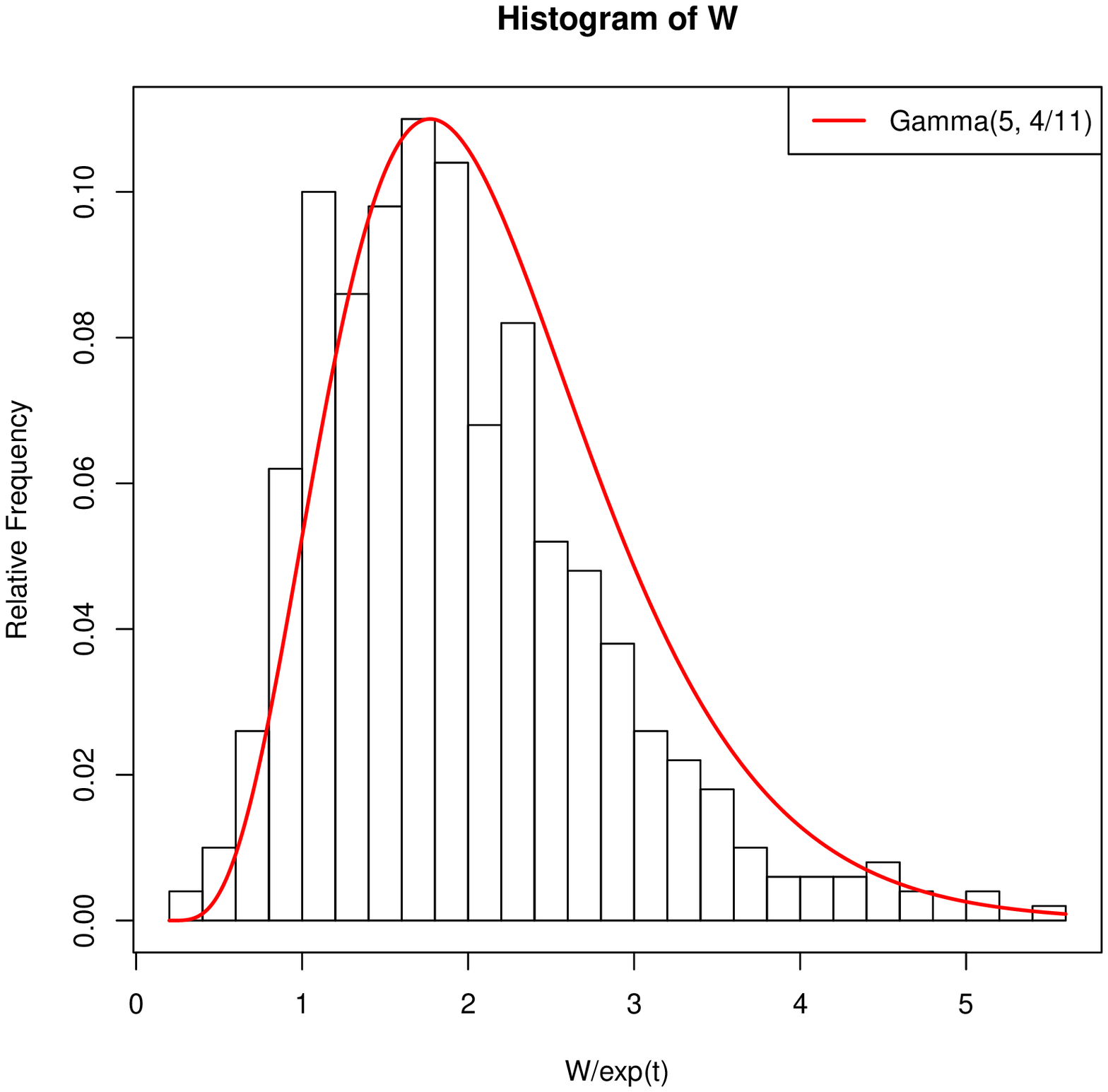}
		\caption{Histogram of $W(t)/e^{t}$ of the P\'{o}lya process with randomized replacement matrix.}
		\label{Fig:PWw}
	\end{minipage}
	\hfill
	\begin{minipage}[b]{0.45\textwidth}
		\includegraphics[width=\textwidth]{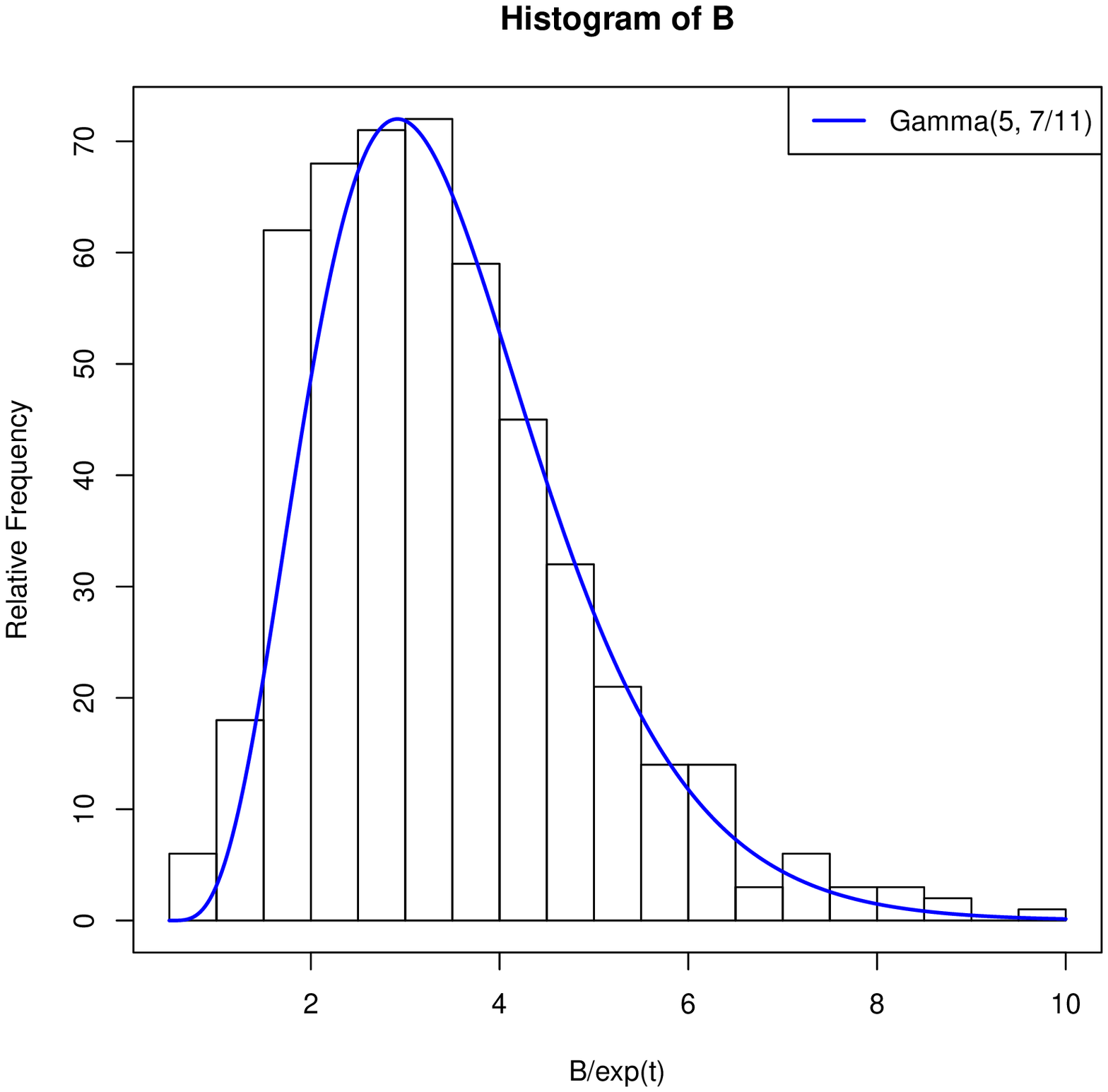}
		\caption{Histogram of $B(t)/e^{t}$ of the P\'{o}lya process with randomized replacement matrix.}
		\label{Fig:PWb}
	\end{minipage}
\end{figure}

We also compute the sample proportion of the number of white balls in the urn over all replicates, and get $0.36314$, cf.\ the theoretically asymptotic proportion $q_2 / (q_1 + q_2) \approx 0.36364$. In addition, the sample correlation between $W(t)/e^{t}$ and $B(t)/e^{t}$ is $0.99847$.

\section{Concluding remarks}
\label{Sec:concluding}
We finally add some concluding remarks in this section. In the discrete-time Bagchi-Pal urn schemes, the numbers of white balls $W_n$ and blue balls $B_n$ both asymptotically have normal distributions after properly scaled~\cite{Bagchi}, under an additional condition of $b + c \ge k/2$. By contrast, in the continuous-time Bagchi-Pal processes that arise in the Poissonized Bagchi-Pal urn schemes, we need to appropriately scale the corresponding random variables $W(t)$ and $B(t)$ to obtain limiting distributions, and both limits are gamma. We see that embedding into continuous time produces remarkably different results.

This note provides a novel perspective to characterizing the asymptotic behavior of the stochastic processes. Our approach is to determine asymptotic mixed moments of the process in a bootstrapped manner from the ordinary differential equations obtained by applying some differential operator to the PDE that governs the process. Noteworthy as a fact is that our approach herein is elementary, as it only requires basic knowledge of calculus (the {\em Leibniz rule}) and linear algebra (matrix differential equation). Our method may be of broad applicability in many other types of stochastic processes, which remains to be explored in our future work. We also would like to point out that the methods in~\cite{Janson2004} and~\cite{Chauvin} were restricted to some conditions (all entries in the replacement matrix at least $-1$ for the former and an ordinary balance condition for the latter). It seems that neither of these restrictions has effect on our approach, albeit some tenability issues need to be stressed. We would like to do some further investigations in the this direction in our future work as well.

\vskip 3mm

\noindent ACKNOWLEDGMENT

The authors would like to thank Professor Hosam M.\ Mahmoud for providing many valuable insights and giving many ingenious suggestions to this manuscript. We are also grateful to the anonymous referees for their helpful advice and comments.


\end{document}